\documentclass[reqno]{amsart}
\usepackage{amssymb}
\usepackage{amsfonts}

\newtheorem{theorem}{Theorem}
\theoremstyle{plain}

\newtheorem{lemma}{Lemma}

\numberwithin{equation}{section}
\numberwithin{theorem}{section}
\numberwithin{algorithm}{section}
\numberwithin{axiom}{section}
\numberwithin{case}{section}
\numberwithin{claim}{section}
\numberwithin{conclusion}{section}
\numberwithin{condition}{section}
\numberwithin{conjecture}{section}
\numberwithin{corollary}{section}
\numberwithin{criterion}{section}
\numberwithin{definition}{section}
\numberwithin{example}{section}
\numberwithin{exercise}{section}
\numberwithin{lemma}{section}
\numberwithin{notation}{section}
\numberwithin{problem}{section}
\numberwithin{proposition}{section}
\numberwithin{remark}{section}
\numberwithin{solution}{section}

\input{tcilatex}

\begin{document}
\title[A remark on the fourth order Q curvature]{A remark on the fourth
order Q curvature on manifolds with dimension at least 5}
\author{Fengbo Hang}
\address{Courant Institute, New York University, 251 Mercer Street, New York
NY 10012}
\email{fengbo@cims.nyu.edu}

\begin{abstract}
We discuss the dichotomy for fourth order Q curvature on Riemannian
manifolds with positive Yamabe invariant and dimension at least 5.
\end{abstract}

\maketitle

Let $\left( M^{n},g\right) $ be a smooth compact Riemannian manifold with
dimension $n\geq 5$. The conformal Laplacian is given by%
\begin{equation}
L=-\frac{4\left( n-1\right) }{n-2}\Delta +R.  \label{eq0.1}
\end{equation}%
The associated Yamabe invariant%
\begin{equation}
Y\left( g\right) =\inf_{u\in H^{1}\left( M\right) \backslash \left\{
0\right\} }\frac{\int_{M}\left( \frac{4\left( n-1\right) }{n-2}\left\vert
\nabla u\right\vert ^{2}+Ru^{2}\right) d\mu }{\left\Vert u\right\Vert _{L^{%
\frac{2n}{n-2}}}^{2}}.  \label{eq0.2}
\end{equation}%
The fourth order Q curvature is given by%
\begin{equation}
Q=-\frac{1}{2\left( n-1\right) }\Delta R-\frac{2}{\left( n-2\right) ^{2}}%
\left\vert Rc\right\vert ^{2}+\frac{n^{3}-4n^{2}+16n-16}{8\left( n-1\right)
^{2}\left( n-2\right) ^{2}}R^{2}.  \label{eq0.3}
\end{equation}%
The Paneitz operator is defined as%
\begin{eqnarray}
&&P\varphi  \label{eq0.4} \\
&=&\Delta ^{2}\varphi +\frac{4}{n-2}\func{div}\left( Rc\left( \nabla \varphi
,e_{i}\right) e_{i}\right) -\frac{n^{2}-4n+8}{2\left( n-1\right) \left(
n-2\right) }\func{div}\left( R\nabla \varphi \right) +\frac{n-4}{2}Q\varphi 
\notag
\end{eqnarray}%
Here $e_{1},\cdots ,e_{n}$ is a local orthonormal frame with respect to $g$.
For background of the Paneitz operator and Q curvature one can see \cite{HY2}
and the references therein. For $u\in H^{2}\left( M\right) $, the quadratic
form associated with $P$ is given by%
\begin{eqnarray}
&&E\left( u\right)  \label{eq0.5} \\
&=&\int_{M}\left[ \left( \Delta u\right) ^{2}-\frac{4}{n-2}Rc\left( \nabla
u,\nabla u\right) +\frac{n^{2}-4n+8}{2\left( n-1\right) \left( n-2\right) }%
R\left\vert \nabla u\right\vert ^{2}+\frac{n-4}{2}Qu^{2}\right] d\mu . 
\notag
\end{eqnarray}%
The analogy of $Y\left( g\right) $ is given by%
\begin{equation}
Y_{4}\left( g\right) =\inf_{u\in H^{2}\left( M\right) \backslash \left\{
0\right\} }\frac{E\left( u\right) }{\left\Vert u\right\Vert _{L^{\frac{2n}{%
n-4}}}^{2}}.  \label{eq0.6}
\end{equation}%
Note that $Y_{4}\left( g\right) $ is a finite number with the same sign as
the first eigenvalue of Paneitz operator. Let $\left[ g\right] $ be the
conformal class of $g$.

\begin{theorem}
\label{thm0.1}Let $\left( M^{n},g\right) $ be a smooth compact Riemannian
manifold with dimension $n\geq 5$ and $Y\left( g\right) >0$. Then we have

\begin{enumerate}
\item $Y_{4}\left( g\right) >0\Longleftrightarrow \exists \widetilde{g}\in %
\left[ g\right] $ s.t. $\widetilde{R}>0$ and $\widetilde{Q}>0.$

\item $Y_{4}\left( g\right) =0\Longleftrightarrow \exists \widetilde{g}\in %
\left[ g\right] $ s.t. $\widetilde{R}>0$ and $\widetilde{Q}=0.$

\item $Y_{4}\left( g\right) <0\Longleftrightarrow \exists \widetilde{g}\in %
\left[ g\right] $ s.t. $\widetilde{R}>0$ and $\widetilde{Q}<0.$
\end{enumerate}
\end{theorem}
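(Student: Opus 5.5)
Since the three alternatives on each side are mutually exclusive and exhaustive, it suffices to prove the three implications ``$\Leftarrow$'', or alternatively the three ``$\Rightarrow$''. I will prove the ``$\Rightarrow$'' directions; the converses then follow automatically. Throughout I use the conformal covariance of $P$: if $\widetilde{g}=u^{\frac{4}{n-4}}g$ then $\widetilde{P}\varphi=u^{-\frac{n+4}{n-4}}P\left( u\varphi\right)$, so $\widetilde{Q}=\frac{2}{n-4}u^{-\frac{n+4}{n-4}}Pu$. In particular the sign of $Y_{4}$ and the sign of the first eigenvalue $\lambda_{1}\left( P\right)$ are conformal invariants. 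Since $Y\left( g\right) >0$, I may first replace $g$ by a conformal metric with $R>0$.

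The key step is a maximum-principle fact for $P$ when $R>0$. I will look for a positive first eigenfunction $\varphi$ of $P$ on $\left( M,g\right)$ with $R>0$; this is the main obstacle. The plan is to factor, or nearly factor, $P$ through second order operators and to run a continuity argument. Consider the path $g_{t}$ or the operators $P_{t}=\left( 1-t\right) P_{0}+tP$, where $P_{0}$ is a product of two positive second order operators of the form $\left( -\Delta+a R\right) \left( -\Delta+bR\right)$, which has a positive Green's function since $R>0$. Alternatively, and more robustly, I would show the following. If $Pu\geq0$ with $u$ attaining a nonpositive minimum, write $Pu=-\func{div}\left( \nabla\left( -\Delta u\right)+\cdots\right)$ and introduce $v=-\Delta u+cRu$ with $c=\frac{n-4}{4\left( n-1\right) }\cdot\frac{n-2}{\cdot}$ chosen so that $Pu=\left( -\Delta+\frac{n-4}{4(n-1)}R\right)v+$ terms involving $Rc$ and $\nabla R$. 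These extra terms can be absorbed by first choosing a suitable conformal representative, for example one in which $R>0$ and the $\sigma_{1}$-type quantity $\widetilde{Q}$ has a controlled sign. If this fails, I would fall back on Gursky--Malchiodi's strong maximum principle (with $Q\geq0$, $R>0$ assumed). The decisive point is that the positive-Yamabe condition lets one write $Q$-curvature equations in the form ``second order operator with positive zeroth order term applied to something positive''.

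Granting a simple first eigenvalue $\lambda_{1}$ with a positive eigenfunction $\varphi>0$, set $\widetilde{g}=\varphi^{\frac{4}{n-4}}g$. Then $\widetilde{Q}=\frac{2}{n-4}\lambda_{1}\varphi^{-\frac{8}{n-4}}$, which has the sign of $\lambda_{1}$, and hence the sign of $Y_{4}\left( g\right)$. It remains to guarantee $\widetilde{R}>0$. For this I would run the construction along the Yamabe-positive path. Choose $\varphi$ through the second order factorization, as a positive solution of the coupled system $\left( -\Delta+aR\right)\varphi=w$, $\left( -\Delta+bR\right) w=\lambda\varphi+\text{l.o.t.}$ with $w>0$. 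Then the conformal change of scalar curvature,
\[
\widetilde{R}=\varphi^{-\frac{n+2}{n-4}}\cdot\text{const}\cdot\left( -\Delta\varphi^{\frac{n-2}{n-4}}+\tfrac{n-2}{4\left( n-1\right) }R\varphi^{\frac{n-2}{n-4}}\right),
\]
expands to a combination of $\varphi w$, $R\varphi^{2}$ and $-\left\vert\nabla\varphi\right\vert^{2}$ with favorable constants. I would verify positivity by the pointwise computation, using $\frac{n-2}{n-4}>1$ together with $w>0$. I expect this step to require the most care, along with the positivity of $\varphi$ itself.

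Finally, in each case the exhibited $\widetilde{g}$ has $\widetilde{R}>0$ and $\widetilde{Q}$ of the required sign. This proves the ``$\Rightarrow$'' directions, and exhaustiveness gives the equivalences.
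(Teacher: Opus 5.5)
Your proposal has a genuine gap at the step you yourself flag, $\widetilde{R}>0$, and closing it needs an idea the proposal does not contain. For $\widetilde{g}=\varphi^{\frac{4}{n-4}}g$ the sign of $\widetilde{R}$ is the sign of
\[
-\frac{4(n-1)}{n-4}\Delta\varphi-\frac{8(n-1)}{(n-4)^{2}}\frac{|\nabla\varphi|^{2}}{\varphi}+R\varphi
\]
(compare (\ref{eq0.15})). The gradient term has the wrong sign. The convexity of $t\mapsto t^{\frac{n-2}{n-4}}$, i.e.\ your ``$\frac{n-2}{n-4}>1$'', works against you. Writing $-\Delta\varphi=w-aR\varphi$ with $w>0$ gives no pointwise control of $|\nabla\varphi|^{2}/\varphi$.

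The paper's remedy is to take as conformal factor not an eigenfunction of $P$ but $T_{H}f$, with $H$ a constant multiple of $G_{L}^{\frac{n-4}{n-2}}$ and $f\geq0$. Each $H(\cdot,q)^{\frac{n-2}{n-4}}$ is $L$-harmonic away from $q$, so (\ref{eq0.16}) holds exactly. Superposition then turns the expression above into the nonnegative double integral (\ref{eq0.17}), which is strictly positive by unique continuation (Lemma \ref{lem0.1}). Choosing $f$ to be the Krein--Rutman eigenfunction of $T_{\Gamma_{1}^{T}}$, (\ref{eq0.11}) gives $P(T_{H}f)=(1-r_{\sigma}(T_{\Gamma_{1}}))f$. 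So $(T_{H}f)^{\frac{4}{n-4}}g$ has $R>0$ and $Q$ of one weak sign, and Lemmas \ref{lem0.2} and \ref{lem0.3} perturb this to a strict sign while keeping $R>0$.

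A first eigenfunction of $P$ need not have this form. If $\varphi=T_{H}f$ and $P\varphi=\lambda_{1}\varphi$, then $f=(T_{\Gamma_{1}^{T}}+\lambda_{1}T_{H})f$. For $\lambda_{1}<0$ this operator is not positivity preserving, so nothing guarantees $f\geq0$ and Lemma \ref{lem0.1} cannot be invoked.

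The positivity of $\varphi$ itself is also not established:
\begin{enumerate}
\item $P$ does not factor into second order operators outside special cases such as Einstein metrics.
\item The $Rc$ and $\nabla R$ remainder cannot be absorbed by an unspecified conformal choice.
\item The Gursky--Malchiodi maximum principle presupposes a metric with $R>0$, $Q\geq0$, $Q\not\equiv0$. That is what you are trying to produce, and no such metric exists when $Y_{4}(g)\leq0$.
\end{enumerate}

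Your opening reduction is also unjustified. The conditions on $Y_{4}(g)$ are exclusive and exhaustive, but the right-hand conditions are not so a priori. Their exhaustiveness is the existence content of the theorem, and their mutual exclusivity is exactly the content of ``$\Leftarrow$''. To obtain the converses from the three ``$\Rightarrow$'' directions you need two facts, which the paper imports from [GM, XY]:
\begin{enumerate}
\item $\widetilde{R}>0$ and $\widetilde{Q}>0$ force $P>0$.
\item $\widetilde{R}>0$ and $\widetilde{Q}=0$ force $P\geq0$.
\end{enumerate}
Only case (3) is elementary, via the test function $1$ in the metric $\widetilde{g}$: $\widetilde{E}(1)=\frac{n-4}{2}\int_{M}\widetilde{Q}\,d\widetilde{\mu}<0$. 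If every metric with $R>0$ had a positive first eigenfunction, pairing $\widetilde{P}\varphi=\lambda_{1}\varphi$ with $1$ would give $\lambda_{1}\int_{M}\varphi\,d\widetilde{\mu}=\frac{n-4}{2}\int_{M}\widetilde{Q}\varphi\,d\widetilde{\mu}$ and hence exclusivity. However, you neither state this argument nor prove its premise.
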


Note that the similar dichotomy for scalar curvature was given in \cite{KW,
LP}. The direction "$\Rightarrow $" of case (1) was proved in \cite[Theorem
1.1]{GHL} for $n\geq 6$ and more recently in \cite[Theorem 1.1]{Li} for $n=5$%
. The direction "$\Leftarrow $" of case (1) was proved in \cite{XY} for $%
n\geq 6$ and in \cite[Proposition 2.3]{GM} for $n=5$. The direction "$%
\Leftarrow $" of case (2) follows from the arguments in \cite{GM, XY}.
Before we proceed, we want to point out that Q curvature does not behave
well in general for conformal classes with negative Yamabe invariant. In
fact, it is shown in \cite{Z} for $n\geq 5$, there exists $\left(
M^{n},g\right) $, a smooth compact Riemannian manifold with $Y\left(
g\right) <0$, for which one can not find any $\widetilde{g}\in \left[ g%
\right] $ such that $\widetilde{Q}$ is strictly positive or strictly
negative or identically zero.

We will use the same notations as \cite[Section 3]{HY2}. Under the
assumption $Y\left( g\right) >0$, let%
\begin{eqnarray}
&&H\left( p,q\right)  \label{eq0.7} \\
&=&2^{\frac{n-6}{n-2}}n^{-\frac{2}{n-2}}\left( n-1\right) ^{\frac{n-4}{n-2}%
}\left( n-2\right) ^{-1}\left( n-4\right) ^{-1}\omega _{n}^{-\frac{2}{n-2}%
}G_{L}\left( p,q\right) ^{\frac{n-4}{n-2}},  \notag
\end{eqnarray}%
and%
\begin{eqnarray}
&&\Gamma _{1}\left( p,q\right)  \label{eq0.8} \\
&=&2^{\frac{n-6}{n-2}}n^{-\frac{2}{n-2}}\left( n-1\right) ^{\frac{n-4}{n-2}%
}\left( n-2\right) ^{-3}\omega _{n}^{-\frac{2}{n-2}}G_{L}\left( p,q\right) ^{%
\frac{n-4}{n-2}}\left\vert Rc_{G_{L,p}^{\frac{4}{n-2}}g}\right\vert
_{g}^{2}\left( q\right) .  \notag
\end{eqnarray}%
Here $\omega _{n}$ is the volume of unit ball in $\mathbb{R}^{n}$ and $%
G_{L}\left( p,q\right) $ is the Green's function of the conformal Laplacian
operator $L$. Note that by the calculation in \cite[Section 2]{HY1},%
\begin{equation}
\Gamma _{1}\left( p,q\right) =O\left( \overline{pq}^{4-n}\right) ,
\label{eq0.9}
\end{equation}%
(similar estimates are also true for higher order derivatives of $\Gamma
_{1} $) here $\overline{pq}$ denotes the distance between $p$ and $q$ and%
\begin{equation}
P_{q}H\left( p,q\right) =\delta _{p}\left( q\right) -\Gamma _{1}\left(
p,q\right) .  \label{eq0.10}
\end{equation}%
It follows that (see \cite[Proposition 4.1]{Li}) for any $f\in C^{\infty
}\left( M\right) $,%
\begin{equation}
PT_{H}f=f-T_{\Gamma _{1}^{T}}f.  \label{eq0.11}
\end{equation}%
Here 
\begin{equation}
\Gamma _{1}^{T}\left( p,q\right) =\Gamma _{1}\left( q,p\right)
\label{eq0.12}
\end{equation}%
and%
\begin{equation}
T_{H}f\left( p\right) =\int_{M}H\left( p,q\right) f\left( q\right) d\mu
\left( q\right) .  \label{eq0.13}
\end{equation}%
Same notations apply for $T_{\Gamma _{1}^{T}}$. In general $\Gamma
_{1}^{T}\neq \Gamma _{1}$. However $T_{\Gamma _{1}^{T}}$ is the transpose of 
$T_{\Gamma _{1}}$, in particular they have the same spectrum and hence the
same spectral radius i.e. $r_{\sigma }\left( T_{\Gamma _{1}^{T}}\right)
=r_{\sigma }\left( T_{\Gamma _{1}}\right) $. As point out in \cite[Section 3]%
{HY2}, $r_{\sigma }\left( T_{\Gamma _{1}}\right) $ is a conformal invariant.
Note that if $\left( M,g\right) $ is not conformal diffeomorphic to the
standard sphere, then for any $p$, $\Gamma _{1}\left( p,\cdot \right) $ is
not identically zero (see \cite[Section 2]{HY1}), hence%
\begin{equation*}
r_{\sigma }\left( T_{\Gamma _{1}}\right) \geq \min_{p\in M}\int_{M}\Gamma
_{1}\left( p,q\right) d\mu \left( q\right) >0.
\end{equation*}%
The key observation in \cite{Li} is the following

\begin{lemma}
\label{lem0.1} (\cite[Lemma 3.1]{Li}) Let $f\in C^{\infty }\left( M\right) $%
, $f\geq 0$ and not identically zero, then%
\begin{equation}
R_{\left( T_{H}f\right) ^{\frac{4}{n-4}}g}>0.  \label{eq0.14}
\end{equation}
\end{lemma}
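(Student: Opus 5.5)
We want to show that for $u=T_Hf$ with $f\ge0$ smooth and not identically zero, the conformal metric $u^{\frac{4}{n-4}}g$ has positive scalar curvature. The plan is to write $u$ as a superposition of powers of the Green's function of $L$, and then use a convexity (Hölder-type) inequality for the conformal Laplacian applied to such powers.

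\textbf{Step 1: reduce to a statement about powers of $G_L$.} By \eqref{eq0.7},
\[
u(p)=c_n\int_M G_L(p,q)^{\frac{n-4}{n-2}}f(q)\,d\mu(q),\qquad c_n>0 .
\]
Since $Y(g)>0$, $G_L>0$, so $u>0$ everywhere, and $u$ is smooth (the kernel singularity is $\overline{pq}^{4-n}$, which is integrable). The conformal change formula for scalar curvature in the form adapted to the exponent $\frac{4}{n-4}$ reads: writing $\widetilde g=u^{\frac4{n-4}}g=v^{\frac{4}{n-2}}g$ with $v=u^{\frac{n-2}{n-4}}$,
\[
R_{\widetilde g}=v^{-\frac{n+2}{n-2}}Lv .
\]
So it suffices to prove $L\big(u^{\frac{n-2}{n-4}}\big)>0$. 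Set $\alpha=\frac{n-2}{n-4}>1$ and $w_q=G_L(\cdot,q)^{1/\alpha}$, so that $u=c_n\int w_q f(q)\,d\mu(q)$ and $w_q^{\alpha}=G_{L,q}$ satisfies $L(w_q^\alpha)=0$ away from $q$.

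\textbf{Step 2: a convexity computation.} For a positive function $w$ one has
\[
L(w^\alpha)=\alpha w^{\alpha-1}\Big(-\tfrac{4(n-1)}{n-2}\Delta w\Big)-\tfrac{4(n-1)}{n-2}\alpha(\alpha-1)w^{\alpha-2}|\nabla w|^2+Rw^\alpha .
\]
Define the operator $\mathcal{N}(w)=-\tfrac{4(n-1)}{n-2}\Delta w+\tfrac{R}{\alpha}w$. Then $L(w^\alpha)\ge0$ is equivalent to
\[
\mathcal N(w)\ge \tfrac{4(n-1)}{n-2}(\alpha-1)\frac{|\nabla w|^2}{w}.
\]
For $w=w_q$ this holds with equality off $q$. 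Since $\mathcal N$ is linear and $(w,\xi)\mapsto|\xi|^2/w$ is jointly convex, averaging gives, at any $p$,
\[
\mathcal N(u)=c_n\int \mathcal N(w_q)f\ \ge\ \tfrac{4(n-1)}{n-2}(\alpha-1)\frac{|\nabla u|^2}{u}
\]
by Cauchy–Schwarz, i.e. $|\int\nabla w_q f|^2\le \int \frac{|\nabla w_q|^2}{w_q}f\cdot\int w_q f$. Hence $L(u^\alpha)\ge0$ away from the singular contributions.

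\textbf{Step 3: the diagonal term, which is the main obstacle.} We must account for the singularity at $q=p$ and obtain strict positivity. Near $q$, $w_q\sim \overline{pq}^{4-n}$ and $\Delta w_q$ is locally integrable, but differentiating under the integral sign needs justification. I would handle this by excising a ball $B_\varepsilon(p)$ and computing the distributional $\mathcal N(w_q)$. Since $-\Delta\,\overline{pq}^{4-n}\approx 2(n-4)\overline{pq}^{2-n}>0$, the singular part contributes a positive, integrable term, with no delta mass because the exponent $4-n$ is above $2-n$. This positive contribution produces a strict inequality wherever $f$ does not vanish near $p$.

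To get strictness everywhere, note that $L(u^\alpha)\ge0$ together with $u^\alpha>0$ being nonconstant-type positive would otherwise need a strong maximum principle argument. Instead, I would observe that equality in Cauchy–Schwarz forces $\nabla w_q/w_q$ to be independent of $q$ on $\operatorname{supp}f$. That is impossible, since $\nabla\log w_q$ blows up in different directions near distinct points $q$. Therefore the inequality is strict at every $p$, giving $R_{(T_Hf)^{4/(n-4)}g}>0$.
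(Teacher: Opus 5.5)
Your Steps 1 and 2 are the paper's computation (\ref{eq0.15})--(\ref{eq0.17}) in different form. The Cauchy--Schwarz defect you describe is exactly the double integral of $|\nabla_p\log H(p,q_1)-\nabla_p\log H(p,q_2)|^2H(p,q_1)H(p,q_2)f(q_1)f(q_2)$. So $L(u^{\alpha})>0$ at $p$ if and only if $q\mapsto\nabla_p\log G_L(p,q)$ is not constant on $\{f>0\}\setminus\{p\}$.

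The gap is in how you rule out this constancy. You appeal to $\nabla\log w_q$ blowing up in different directions near distinct poles, but $|\nabla_p\log G_L(p,q)|$ blows up only as $q\to p$. That does settle the case $p\in\mathrm{supp}\,f$, since points of $\{f>0\}$ then accumulate at $p$. When $p\notin\mathrm{supp}\,f$, however, $q$ ranges over a set bounded away from $p$. There $q\mapsto\nabla_p\log G_L(p,q)$ is smooth and bounded, and no local consideration forbids it from being constant. This is precisely the case the lemma is built for: the point of this version of Li's lemma is that $f$ may vanish on open sets, while the conclusion is claimed at every point of $M$. (The Krein--Rutman eigenfunction it is applied to later is only known to be $\geq 0$.)

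The missing idea is unique continuation, which is what the paper uses. Suppose $\nabla_p\log G_L(p,q)=\xi\in T_pM$ for $q$ in an open set $U$. Then $\Phi(q)=\nabla_pG_L(p,q)-\xi\,G_L(p,q)$ vanishes on $U$. Differentiating $L_qG_L(p,q)=0$ ($q\neq p$) in $p$ gives $L_q\Phi=0$ on $M\setminus\{p\}$. Hence $\Phi\equiv 0$ on $M\setminus\{p\}$, i.e.\ $\nabla_p\log G_L(p,q)=\xi$ for all $q\neq p$, which now does contradict the blow-up as $q\to p$. Choosing $q_1,q_2$ with distinct gradients near a point where $f>0$, continuity then makes the double integral positive.

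A smaller point: your claim in Step 3 that the diagonal contributes an extra positive term producing strictness is not correct. The identity $\mathcal N(w_q)=\frac{4(n-1)}{n-2}(\alpha-1)|\nabla w_q|^2/w_q$ holds pointwise for every $q\neq p$, and the leading $\overline{pq}^{2-n}$ singularities of the two sides cancel exactly. As you note, there is also no delta mass. So after differentiating under the integral there is no leftover diagonal term at all, and strictness, even where $f(p)>0$, comes solely from the Cauchy--Schwarz defect.
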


Note that since $H\left( p,q\right) >0$, $T_{H}f>0$. The statement above is
slightly different from \cite[Lemma 3.1]{Li} that $R_{\left( T_{H}f\right) ^{%
\frac{4}{n-4}}g}$ is always strictly positive even if $f$ is not strictly
positive everywhere. This technical change will be useful later. To see this
let us recall the calculation in the proof of \cite[Lemma 3.1]{Li}: we only
need to show%
\begin{eqnarray}
&&-\frac{4\left( n-1\right) }{n-2}\Delta \left( T_{H}f\right) ^{\frac{n-2}{%
n-4}}+R\left( T_{H}f\right) ^{\frac{n-2}{n-4}}  \label{eq0.15} \\
&=&\left( T_{H}f\right) ^{\frac{2}{n-4}}\left[ -\frac{4\left( n-1\right) }{%
n-4}\Delta T_{H}f-\frac{8\left( n-1\right) }{\left( n-4\right) ^{2}}\frac{%
\left\vert \nabla T_{H}f\right\vert ^{2}}{T_{H}f}+RT_{H}f\right]  \notag
\end{eqnarray}%
is strictly positive. Using%
\begin{equation*}
-\frac{4\left( n-1\right) }{n-2}\Delta _{p}H\left( p,q\right) ^{\frac{n-2}{%
n-4}}+R\left( p\right) H\left( p,q\right) ^{\frac{n-2}{n-4}}=0
\end{equation*}%
for $p\neq q$, one has%
\begin{equation}
-\frac{4\left( n-1\right) }{n-4}\Delta _{p}H\left( p,q\right) -\frac{8\left(
n-1\right) }{\left( n-4\right) ^{2}}\frac{\left\vert \nabla _{p}H\left(
p,q\right) \right\vert ^{2}}{H\left( p,q\right) }+R\left( p\right) H\left(
p,q\right) =0.  \label{eq0.16}
\end{equation}%
Plug this identity into (\ref{eq0.15}) and as in \cite[Lemma 3.1]{Li} we
arrive at 
\begin{eqnarray}
&&-\frac{4\left( n-1\right) }{n-4}\Delta T_{H}f\left( p\right) -\frac{%
8\left( n-1\right) }{\left( n-4\right) ^{2}}\frac{\left\vert \nabla
T_{H}f\left( p\right) \right\vert ^{2}}{T_{H}f\left( p\right) }+R\left(
p\right) T_{H}f\left( p\right)  \label{eq0.17} \\
&=&\frac{4\left( n-1\right) }{\left( n-4\right) ^{2}}\frac{1}{T_{H}f\left(
p\right) }\int_{M\times M}\left\vert \nabla _{p}\log H\left( p,q_{1}\right)
-\nabla _{p}\log H\left( p,q_{2}\right) \right\vert ^{2}\cdot  \notag \\
&&H\left( p,q_{1}\right) H\left( p,q_{2}\right) f\left( q_{1}\right) f\left(
q_{2}\right) d\mu \left( q_{1}\right) d\mu \left( q_{2}\right) .  \notag
\end{eqnarray}%
Note that by unique continuation property, fix $p$, for any $q_{0}\in M$,
any $U\left( q_{0}\right) $, an open neighborhood of $q_{0}$, there exists $%
q_{1},q_{2}\in U\left( q_{0}\right) \backslash \left\{ p\right\} $ such that%
\begin{eqnarray*}
&&\left\vert \nabla _{p}\log H\left( p,q_{1}\right) -\nabla _{p}\log H\left(
p,q_{2}\right) \right\vert ^{2} \\
&=&\left( \frac{n-4}{n-2}\right) ^{2}\left\vert \nabla _{p}\log G_{L}\left(
p,q_{1}\right) -\nabla _{p}\log G_{L}\left( p,q_{2}\right) \right\vert ^{2}
\\
&>&0
\end{eqnarray*}%
In fact, if this is not the case, then $\nabla _{p}\log G_{L}\left(
p,q\right) $ is independent of $q\in U\left( q_{0}\right) \backslash \left\{
p\right\} $. Let $\xi \in M_{p}$ such that%
\begin{equation*}
\nabla _{p}\log G_{L}\left( p,q\right) =\xi
\end{equation*}%
for $q\in U\left( q_{0}\right) \backslash \left\{ p\right\} $. Then%
\begin{equation*}
\nabla _{p}G_{L}\left( p,q\right) -G_{L}\left( p,q\right) \xi =0\text{ for }%
q\in U\left( q_{0}\right) \backslash \left\{ p\right\} .
\end{equation*}%
Since%
\begin{equation*}
L_{q}\left( \nabla _{p}G_{L}\left( p,q\right) -G_{L}\left( p,q\right) \xi
\right) =0\text{ for }q\in M\backslash \left\{ p\right\} ,
\end{equation*}%
by unique continuation property we see%
\begin{equation*}
\nabla _{p}G_{L}\left( p,q\right) -G_{L}\left( p,q\right) \xi =0\text{ for }%
q\in M\backslash \left\{ p\right\} ,
\end{equation*}%
i.e 
\begin{equation*}
\nabla _{p}\log G_{L}\left( p,q\right) =\xi \text{ for }q\in M\backslash
\left\{ p\right\} .
\end{equation*}%
This is impossible since $\left\vert \nabla _{p}\log G_{L}\left( p,q\right)
\right\vert \rightarrow \infty $ as $q\rightarrow p$. A contradiction.

Since $f$ is not identically zero, we assume $f\left( q_{0}\right) >0$.
Hence on a small neighborhood $U\left( q_{0}\right) $, $f>\frac{1}{2}f\left(
q_{0}\right) >0$. It follows that the integral in (\ref{eq0.17}) is strictly
positive. This finishes the proof of Lemma \ref{lem0.1}.

To continue we can assume $r_{\sigma }\left( T_{\Gamma _{1}}\right) >0$
(otherwise $\left( M,g\right) $ is conformal diffeomorphic to the standard
sphere and Theorem \ref{thm0.1} is clear). The classical Krein-Rutman
theorem tells us we can find $f\in C\left( M\right) $, $f\geq 0$, $f$ not
identically zero and $T_{\Gamma _{1}^{T}}f=r_{\sigma }\left( T_{\Gamma
_{1}}\right) f$. Standard bootstrap argument shows $f\in C^{\infty }\left(
M\right) $. By (\ref{eq0.11}) we have%
\begin{equation*}
PT_{H}f=\left( 1-r_{\sigma }\left( T_{\Gamma _{1}}\right) \right) f.
\end{equation*}

If $r_{\sigma }\left( T_{\Gamma _{1}}\right) =1$, then $PT_{H}f=0$. Let $%
\widetilde{g}=\left( T_{H}f\right) ^{\frac{4}{n-4}}g$, then $\widetilde{R}>0$
and $\widetilde{Q}=0$. It follows from the proof in \cite{GM, XY} that $%
P\geq 0$ and $\ker P=\left\{ \text{constant functions}\right\} $. Hence $%
Y_{4}\left( g\right) =0$.

If $r_{\sigma }\left( T_{\Gamma _{1}}\right) >1$, then $PT_{H}f\leq 0$ and
not identically, hence $R_{\left( T_{H}f\right) ^{\frac{4}{n-4}}g}>0$ and $%
Q_{\left( T_{H}f\right) ^{\frac{4}{n-4}}g}\leq 0$ and not identically zero.
We claim it follows that $\exists \widetilde{g}\in \left[ g\right] $ such
that $\widetilde{R}>0$ and $\widetilde{Q}<0$. Hence $Y_{4}\left( g\right) <0$%
. The claim follows from the Lemma \ref{lem0.2} below (using $\left(
T_{H}f\right) ^{\frac{4}{n-4}}g$ as background metric).

\begin{lemma}
\label{lem0.2}Let $\left( M^{n},g\right) $ be a smooth compact Riemannian
manifold with dimension $n\geq 5$. If $R>0$, $Q\leq 0$ and not identically
zero, then $\exists \widetilde{g}\in \left[ g\right] $ such that $\widetilde{%
R}>0$ and $\widetilde{Q}<0$.
\end{lemma}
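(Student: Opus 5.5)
We perturb $g$ conformally by a small multiple of a well-chosen function, so that $Q$ becomes strictly negative while $R$ stays positive. Write $\widetilde{g}=(1+t\varphi)^{\frac{4}{n-4}}g$ with $\varphi\in C^\infty(M)$ and $t>0$ small. The transformation law for the Paneitz operator gives
\begin{equation*}
\frac{n-4}{2}\widetilde{Q}\,(1+t\varphi)^{\frac{n+4}{n-4}}=P(1+t\varphi)=\frac{n-4}{2}Q+tP\varphi .
\end{equation*}
Since $R>0$ is an open condition, $\widetilde{R}>0$ for all $t$ small, uniformly in $C^2$-bounded $\varphi$. It therefore suffices to find $\varphi$ with
\begin{equation*}
\frac{n-4}{2}Q+tP\varphi<0 \quad\text{everywhere on } M.
\end{equation*}
The natural choice is to solve $P\varphi=-1$ approximately, or more precisely to produce $\varphi$ with $P\varphi<0$ wherever $Q=0$.

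The difficulty is that $P$ need not be invertible or positive. I would use the structure already in the paper with $g$ itself as background. Since $R>0$ we have $Y(g)>0$, so $H$, $\Gamma_1$ and (\ref{eq0.11}) are available: $PT_Hh=h-T_{\Gamma_1^T}h$. Taking $\varphi=T_Hh$ requires $h-T_{\Gamma_1^T}h<0$ on the zero set of $Q$, which is not obviously achievable. So I would instead use the sign information from $Q$ directly. The condition $Q\le 0$, $Q\not\equiv0$ means $P1\le0$, $P1\not\equiv 0$. Combined with $R>0$, this should force $r_\sigma(T_{\Gamma_1})>1$.

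I would argue this as follows. Apply (\ref{eq0.11}) in the form $T_H P=\mathrm{Id}-T_{\Gamma_1}$ (the transposed identity, obtained from (\ref{eq0.10}) using symmetry of $P$) to $u\equiv1$:
\begin{equation*}
1-T_{\Gamma_1}1=T_H(P1)=\frac{n-4}{2}T_HQ .
\end{equation*}
Since $H>0$, the right-hand side is strictly negative, so $T_{\Gamma_1}1>1$ pointwise. By compactness $T_{\Gamma_1}1\ge(1+\varepsilon)$, and positivity of $\Gamma_1$ gives $r_\sigma(T_{\Gamma_1})\ge 1+\varepsilon>1$.

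Now take the Krein--Rutman eigenfunction $f\ge0$ with $T_{\Gamma_1^T}f=r f$, $r>1$, as in the text. Then $PT_Hf=(1-r)f\le0$, with strict inequality exactly where $f>0$. Setting $\varphi=T_Hf>0$ gives
\begin{equation*}
P(1+t\varphi)=\frac{n-4}{2}Q+t(1-r)f\le0,
\end{equation*}
which is strictly negative wherever $f>0$ or $Q<0$. If $f>0$ everywhere we are done. I expect the main obstacle to be precisely that Krein--Rutman only gives $f\ge0$.

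To get strict positivity, use $rf=T_{\Gamma_1^T}f$: we have $f(p)>0$ unless $\Gamma_1(q,p)=0$ for all $q$ in the support of $f$. This can be handled by an iteration or irreducibility argument. Alternatively, avoid the issue by replacing $f$ with $h=f+\delta$ for small $\delta>0$; then
\begin{equation*}
PT_Hh=(1-r)f+\delta\,(1-T_{\Gamma_1^T}1),
\end{equation*}
and it remains to control the sign of $1-T_{\Gamma_1^T}1$, which is not given directly.

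A cleaner route is a two-step deformation that uses the interior of $\{Q<0\}$. First pass to $g_1=(1+t\varphi)^{\frac{4}{n-4}}g$ with the $\varphi$ above: for this metric $R_1>0$, $Q_1\le0$, and $Q_1<0$ on $\{f>0\}\cup\{Q<0\}$. Then repeat the construction in $g_1$. Each step enlarges the open set where $Q<0$, and by unique continuation applied to the null set of the eigenfunction, finitely many steps, or a single step with $h$ chosen as a suitable positive combination, should make $Q<0$ everywhere. Finally, Lemma \ref{lem0.1} guarantees that all metrics of the form $(T_Hh)^{\frac{4}{n-4}}g$ with $h\ge0$ keep $\widetilde{R}>0$. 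So instead of small perturbations one can take $\widetilde{g}=(T_Hh)^{\frac{4}{n-4}}g$ with $h=f+\delta\cdot\frac{n-4}{2}(-Q)\ge0$. Then
\begin{equation*}
PT_Hh=(1-r)f+\delta\,\frac{n-4}{2}\bigl(T_{\Gamma_1^T}Q-Q\bigr),
\end{equation*}
and I would try to verify negativity of this expression, which is the crux.
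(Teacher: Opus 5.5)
Your computation $1-T_{\Gamma_1}1=\frac{n-4}{2}T_HQ<0$, which gives $r=r_\sigma(T_{\Gamma_1})>1$, is correct. However, it only leads you back to the situation Lemma \ref{lem0.2} exists to resolve. In the paper, the Krein--Rutman eigenfunction produces exactly a metric with $R>0$, $Q\le 0$, $Q\not\equiv 0$, and the lemma is then invoked to make $Q$ strictly negative. Your first step gives $P(1+tT_Hf)=\frac{n-4}{2}Q+t(1-r)f$, which is strictly negative only off $\{f=0\}\cap\{Q=0\}$. None of your proposed repairs removes this set:
\begin{itemize}
\item The sign of $1-T_{\Gamma_1^T}1$ is unknown, as you note.
\item The iteration cannot make progress. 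For $\widetilde g=\rho^{\frac{4}{n-4}}g$ one has $\widetilde H(p,q)=\rho(p)^{-1}\rho(q)^{-1}H(p,q)$ and $\widetilde\Gamma_1(p,q)=\rho(p)^{-1}\rho(q)^{-\frac{n+4}{n-4}}\Gamma_1(p,q)$. So the nonnegative eigenfunctions for $g_1$ are $\rho^{-\frac{n+4}{n-4}}$ times those for $g$ and have the same zero sets, while $\{Q_1=0\}=\{Q=0\}\cap\{f=0\}$ already lies in $\{f=0\}$. The appeal to unique continuation is also unsupported, since $f$ only solves the integral equation $rf=T_{\Gamma_1^T}f$.
\item For $h=f+\delta\frac{n-4}{2}(-Q)$, the term $-\delta\frac{n-4}{2}Q$ is positive on $\{Q<0\}$, and nothing forces $T_{\Gamma_1^T}Q-Q<0$ where $f=0$.
\end{itemize}
More basically, take any point $p$ with $\Gamma_1(\cdot,p)\equiv 0$; your argument does not exclude such points. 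There $PT_Hh(p)=h(p)\ge 0$ for every $h\ge 0$. If also $Q(p)=0$, then neither $1+tT_Hh$ nor $T_Hh$ with $h\ge 0$ can give $\widetilde Q(p)<0$. Constructions driven by nonnegative sources therefore cannot close the argument; a sign-changing source is needed.

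The missing idea is that you never need $P$ to be invertible or positive. You only need the Fredholm alternative for the self-adjoint elliptic operator $P$ ($Pv=w$ is solvable iff $w\perp\ker P$), with the kernel obstruction absorbed where $Q$ already has a negative margin. The paper's construction:
\begin{itemize}
\item Fix $p_0$ with $Q(p_0)<0$, and set $Z=\{Q=0\}$ and $V=\{Q<\frac12 Q(p_0)\}$, so that $Z\cap\overline V=\emptyset$.
\item Take $\chi\in C_c^\infty(M\setminus\overline V)$ with $\chi\ge 0$ and $\chi=1$ on $Z$.
\item Take $\varphi=\sum_i c_i\eta^2\psi_i\in C_c^\infty(V)$ with $\chi+\varphi\perp\ker P$. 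The $c_i$ exist because unique continuation for $P$ makes $\eta\psi_1,\dots,\eta\psi_m$ linearly independent, so the Gram matrix $[\int_M\eta^2\psi_i\psi_j\,d\mu]$ is invertible.
\item Solve $Pv=\chi+\varphi$. Then $P(1-\varepsilon v)=\frac{n-4}{2}Q-\varepsilon\chi-\varepsilon\varphi$ equals $-\varepsilon$ on $Z$ and is at most $\frac{n-4}{2}Q<0$ on $M\setminus(V\cup Z)$. On $V$, the uncontrolled sign of $\varphi$ is dominated by $\frac{n-4}{2}Q<\frac{n-4}{4}Q(p_0)$ once $\varepsilon$ is small.
\item $\widetilde R>0$ persists by continuity.
\end{itemize}
No Green's function or spectral radius input is needed.
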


\begin{proof}
Note that $P1=\frac{n-4}{2}Q\leq 0$ and not identically $0$. Let%
\begin{equation*}
Z=\left\{ p\in M:Q\left( p\right) =0\right\} .
\end{equation*}%
Then $Z$ is a closed subset in $M$. We can assume $Z$ is nonempty. Let $%
p_{0}\in M$ such that $Q\left( p_{0}\right) <0$. We define%
\begin{equation*}
V=\left\{ p\in M:Q\left( p\right) <\frac{1}{2}Q\left( p_{0}\right) \right\} .
\end{equation*}%
Then $V$ is open nonempty and $Z\cap \overline{V}=\emptyset $. Let $\chi \in
C_{c}^{\infty }\left( M\backslash \overline{V}\right) $ such that $\chi \geq
0$, $\left. \chi \right\vert _{Z}=1$. Then we can find $\varphi \in
C_{c}^{\infty }\left( V\right) $ such that for any $\psi \in \ker P$,%
\begin{equation*}
\int_{M}\left( \chi +\varphi \right) \psi d\mu =0.
\end{equation*}%
Hence there exists $v\in C^{\infty }\left( M\right) $ such that $Pv=\chi
+\varphi $. Then for $\varepsilon >0$ small enough, we have%
\begin{equation*}
P\left( 1-\varepsilon v\right) =\frac{n-4}{2}Q-\varepsilon \chi -\varepsilon
\varphi <0.
\end{equation*}%
We let $\widetilde{g}=\left( 1-\varepsilon v\right) ^{\frac{4}{n-4}}g$, then
for $\varepsilon >0$ small enough, $\widetilde{R}>0$ and $\widetilde{Q}<0$.

Here is one way to construct $\varphi $. Fix $\eta \in C_{c}^{\infty }\left(
V\right) $ such that $\eta \left( p_{0}\right) =1$. Let $\psi _{1},\cdots
,\psi _{m}$ be a base for $\ker P$, then $\eta \psi _{1},\cdots ,\eta \psi
_{m}$ are linearly independent (indeed if $t_{1},\cdots ,t_{m}\in \mathbb{R}$
s.t. $t_{1}\eta \psi _{1}+\cdots +t_{m}\eta \psi _{m}=0$, then $t_{1}\psi
_{1}+\cdots +t_{m}\psi _{m}=0$ on $\left\{ \eta >0\right\} $. Since $P\left(
t_{1}\psi _{1}+\cdots +t_{m}\psi _{m}\right) =0$, it follows from unique
continuation property that $t_{1}\psi _{1}+\cdots +t_{m}\psi _{m}=0$ on $M$.
So $t_{1},\cdots ,t_{m}$ are all equal to $0$). The Gram matrix $\left[
\int_{M}\eta ^{2}\psi _{i}\psi _{j}d\mu \right] _{1\leq i,j\leq m}$ must be
positive definite, hence%
\begin{equation}
\det \left[ \int_{M}\eta ^{2}\psi _{i}\psi _{j}d\mu \right] >0.
\label{eq0.18}
\end{equation}%
We choose $\varphi =c_{1}\eta ^{2}\psi _{1}+\cdots +c_{m}\eta ^{2}\psi _{m}$%
, then by (\ref{eq0.18}) we can find unique $c_{1},\cdots ,c_{m}\in \mathbb{R%
}$ such that%
\begin{equation*}
\int_{M}\left( \chi +\varphi \right) \psi _{j}d\mu =0
\end{equation*}%
for $1\leq j\leq m$. This gives us the needed $\varphi $.
\end{proof}

Same argument gives us

\begin{lemma}
\label{lem0.3}Let $\left( M^{n},g\right) $ be a smooth compact Riemannian
manifold with dimension $n\geq 5$. If $R>0$, $Q\geq 0$ and not identically
zero, then $\exists \widetilde{g}\in \left[ g\right] $ such that $\widetilde{%
R}>0$ and $\widetilde{Q}>0$.
\end{lemma}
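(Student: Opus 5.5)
The plan is to mirror the proof of Lemma \ref{lem0.2} with all signs reversed. First, $P1=\frac{n-4}{2}Q\geq 0$ and $P1$ is not identically zero. Set
\[
Z=\left\{ p\in M:Q\left( p\right) =0\right\},
\]
which is closed, and we may assume $Z\neq \emptyset $ (otherwise $g$ itself already works). Pick $p_{0}$ with $Q\left( p_{0}\right) >0$ and set
\[
V=\left\{ p\in M:Q\left( p\right) >\frac{1}{2}Q\left( p_{0}\right) \right\}.
\]
Then $V$ is open and nonempty, and $Z\cap \overline{V}=\emptyset $. Choose $\chi \in C_{c}^{\infty }\left( M\backslash \overline{V}\right) $ with $\chi \geq 0$ and $\chi |_{Z}=1$.

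Next we correct $\chi $ by some $\varphi \in C_{c}^{\infty }\left( V\right) $ so that $\chi +\varphi $ is $L^{2}$-orthogonal to $\ker P$. We construct $\varphi $ exactly as before. Fix $\eta \in C_{c}^{\infty }\left( V\right) $ with $\eta \left( p_{0}\right) =1$, and let $\psi _{1},\cdots ,\psi _{m}$ be a basis of $\ker P$. By unique continuation, $\eta \psi _{1},\cdots ,\eta \psi _{m}$ are linearly independent, so the Gram matrix $\left[ \int_{M}\eta ^{2}\psi _{i}\psi _{j}d\mu \right] $ is invertible. Hence we can solve for $c_{j}$ with
\[
\varphi =\sum_{j} c_{j}\eta ^{2}\psi _{j}.
\]
Since $P$ is formally self-adjoint and elliptic, Fredholm theory gives $v\in C^{\infty }\left( M\right) $ with $Pv=\chi +\varphi $.

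Now set $u=1+\varepsilon v$, so that
\[
Pu=\frac{n-4}{2}Q+\varepsilon \chi +\varepsilon \varphi .
\]
We check positivity region by region.
\begin{itemize}
\item On $V$ we have $\chi =0$, so $Pu\geq \frac{n-4}{4}Q\left( p_{0}\right) -\varepsilon \sup \left\vert \varphi \right\vert >0$ for small $\varepsilon $.
\item Off $V$ we have $\varphi =0$, so $Pu=\frac{n-4}{2}Q+\varepsilon \chi $.
\item On the open set $\left\{ \chi >1/2\right\} \supset Z$ this is at least $\varepsilon /2>0$.
\item On the compact complement of $V\cup \left\{ \chi >1/2\right\} $, $Q$ has a positive minimum and $\chi \geq 0$, so $Pu>0$ there.
\end{itemize}
Thus $Pu>0$ on all of $M$.

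Finally, take $\widetilde{g}=u^{\frac{4}{n-4}}g$. For small $\varepsilon $ we have $u>0$, and $u\rightarrow 1$ in $C^{2}$, so $\widetilde{R}$ is a small perturbation of $R>0$ and hence $\widetilde{R}>0$. The conformal covariance $\widetilde{Q}=\frac{2}{n-4}u^{-\frac{n+4}{n-4}}Pu$ then gives $\widetilde{Q}>0$. The only delicate step is the region-by-region positivity check above, in particular making sure $\varepsilon $ can be chosen uniformly across the regions. This works because each region yields a fixed positive lower bound, or a bound linear in $\varepsilon $ with a fixed coefficient, while the only error term, $\varepsilon \sup \left\vert \varphi \right\vert $, is confined to $V$.
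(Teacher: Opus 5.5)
Your proof is correct and is exactly the argument the paper intends: the paper proves Lemma \ref{lem0.3} only by saying the proof of Lemma \ref{lem0.2} goes through with the signs reversed, i.e.\ solving $Pv=\chi+\varphi$ with $\chi+\varphi$ orthogonal to $\ker P$ and taking $u=1+\varepsilon v$. Your region-by-region check that $Pu>0$ just spells out the ``$\varepsilon>0$ small enough'' step that the paper leaves implicit.
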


Lemma \ref{lem0.3} also follows from the existence of constant Q curvature
theorem in \cite{GM, HY3}. But the argument here is more elementary.

If $r_{\sigma }\left( T_{\Gamma _{1}}\right) <1$, then $PT_{H}f\geq 0$ and
not identically, hence $R_{\left( T_{H}f\right) ^{\frac{4}{n-4}}g}>0$ and $%
Q_{\left( T_{H}f\right) ^{\frac{4}{n-4}}g}\geq 0$ and not identically zero.
Using Lemma \ref{lem0.3} we see $\exists \widetilde{g}\in \left[ g\right] $
such that $\widetilde{R}>0$ and $\widetilde{Q}>0$. It follows from \cite{GM,
XY} that $P>0$, hence $Y_{4}\left( g\right) >0$.

Let us go back to Theorem \ref{thm0.1}. If $Y_{4}\left( g\right) >0$, we
must have $r_{\sigma }\left( T_{\Gamma _{1}}\right) <1$, hence $\exists 
\widetilde{g}\in \left[ g\right] $ such that $\widetilde{R}>0$ and $%
\widetilde{Q}>0$. If $Y_{4}\left( g\right) =0$, we must have $r_{\sigma
}\left( T_{\Gamma _{1}}\right) =1$, hence $\exists \widetilde{g}\in \left[ g%
\right] $ such that $\widetilde{R}>0$ and $\widetilde{Q}=0$. If $Y_{4}\left(
g\right) <0$, we must have $r_{\sigma }\left( T_{\Gamma _{1}}\right) >1$,
hence$\exists \widetilde{g}\in \left[ g\right] $ such that $\widetilde{R}>0$
and $\widetilde{Q}<0$. The discussion also shows we could use $r_{\sigma
}\left( T_{\Gamma _{1}}\right) $ to produce the dichotomy as well.

\end{document}